\theoremstyle{plain}
\newtheorem{thm}{Theorem}[section]
\newtheorem{lem}[thm]{Lemma}
\newtheorem{prop}[thm]{Proposition}
\theoremstyle{definition}
\theoremstyle{remark}
\newtheorem{rem}[thm]{Remark}
\numberwithin{equation}{section}
\newcommand{\average}{{\mathchoice {\kern1ex\vcenter{\hrule height.4pt
width 6pt depth0pt} \kern-9.7pt} {\kern1ex\vcenter{\hrule
height.4pt width 4.3pt depth0pt} \kern-7pt} {} {} }}
\def\R{\mathbb{R}}
\begin{document}

\title{Regularity for the fractional Gelfand problem up to dimension 7}

\author{Xavier Ros-Oton}

\address{Universitat Polit\`ecnica de Catalunya, Departament de Matem\`{a}tica  Aplicada I, Diagonal 647, 08028 Barcelona, Spain}
\email{xavier.ros.oton@upc.edu}

\thanks{The author was supported by grants MINECO MTM2011-27739-C04-01 and GENCAT 2009SGR-345.}

\keywords{Fractional Laplacian, Gelfand problem, extremal solution.}

\begin{abstract}
We study the problem $(-\Delta)^su=\lambda e^u$ in a bounded domain $\Omega\subset\R^n$, where $\lambda$ is a positive parameter.
More precisely, we study the regularity of the extremal solution to this problem.

Our main result yields the boundedness of the extremal solution in dimensions $n\leq7$ for all $s\in(0,1)$ whenever $\Omega$ is, for every $i=1,...,n$, convex in the $x_i$-direction and symmetric with respect to $\{x_i=0\}$.
The same holds if $n=8$ and $s\gtrsim0.28206...$, or if $n=9$ and $s\gtrsim0.63237...$.
These results are new even in the unit ball $\Omega=B_1$.
\end{abstract}

\maketitle

\section{Introduction and results}

Let $s\in(0,1)$ and $\Omega$ be a bounded smooth domain in $\R^n$, and consider the problem
\begin{equation}\label{pb-G}
\left\{ \begin{array}{rcll} (-\Delta)^s u &=&\lambda e^u&\textrm{in }\Omega \\
u&=&0&\textrm{in }\mathbb R^n\backslash \Omega.
\end{array}\right.\end{equation}
Here, $\lambda$ is a positive parameter and $(-\Delta)^s$ is the fractional Laplacian, defined by
\begin{equation}
\label{laps}(-\Delta)^s u(x)= c_{n,s}{\rm PV}\int_{\R^n}\frac{u(x)-u(y)}{|x-y|^{n+2s}}dy.
\end{equation}
The aim of this paper is to study the regularity of the so-called extremal solution of the problem \eqref{pb-G}.

For the Laplacian $-\Delta$ (which corresponds to $s=1$) this problem is frequently called the Gelfand problem \cite{Gelfand}, and the existence and regularity properties of its solutions are by now quite well understood \cite{Liouville,JL,NS,MP,CR}; see also \cite{Fujita-exp,Peral-Vaz}.

Indeed, when $s=1$ one can show that there exists a finite extremal parameter $\lambda^*$ such that if $0<\lambda<\lambda^*$ then it
admits a minimal classical solution $u_\lambda$, while for $\lambda>\lambda^*$ it has no weak solution.
Moreover, the pointwise limit $u^*=\lim_{\lambda\uparrow \lambda^*}u_\lambda$ is a weak solution of problem with $\lambda=\lambda^*$.
It is called the extremal solution.
All the solutions $u_\lambda$ and $u^*$ are stable solutions.

On the other hand, the existence of other solutions for $\lambda<\lambda^*$ is a more delicate question, which depends strongly on the regularity of the extremal solution $u^*$.
More precisely, it depends on the boundedness of $u^*$.

It turns out that the extremal solution $u^*$ is bounded in dimensions $n\leq9$ for any domain $\Omega$ \cite{MP,CR}, while $u^*(x)=\log\frac{1}{|x|^2}$ is the (singular) extremal solution in the unit ball when $n\geq10$.
This result strongly relies on the stability of $u^*$.
In the case $\Omega=B_1$, the classification of all radial solutions to this problem was done in \cite{Liouville} for $n=2$, and in \cite{JL,NS} for $n\geq3$.

For more general nonlinearities $f(u)$ the regularity of extremal solutions is only well understood when $\Omega=B_1$.
As in the exponential case, all extremal solutions are bounded in dimensions $n\leq 9$, and may be singular if $n\geq10$ \cite{CC}.
For general domains $\Omega$ the problem is still not completely understood, and the best result in that direction states that all extremal solutions are bounded in dimensions $n\leq4$ \cite{C4,V}.
In domains of double revolution, all extremal solutions are bounded in dimensions $n\leq7$ \cite{CR}.
For more information on this problem, see \cite{BV} and the monograph \cite{D}.

For the fractional Laplacian, the problem was studied by J. Serra and the author \cite{RS-extremal} for general nonlinearities $f$.
We showed that there exists a parameter $\lambda^*$ such that for $0<\lambda<\lambda^*$ there is a branch of minimal solutions $u_\lambda$, for $\lambda>\lambda^*$ there is no bounded solutions, and for $\lambda=\lambda^*$ one has the extremal solution $u^*$, which is a stable solution.
Moreover, depending on the nonlinearity $f$ and on $n$ and $s$, we obtained $L^\infty$ and $H^s$ estimates for the extremal solution in general domains $\Omega$.
Note that, as in the case $s=1$, once we know that $u^*$ is bounded then it follows that it is a classical solution; see for example \cite{RS-Dir}.

For the exponential nonlinearity $f(u)=e^u$, our results in \cite{RS-extremal} yield the boundedness of the extremal solution in dimensions $n<10s$.
Although this result is optimal as $s\rightarrow 1$, it is not optimal, however, for smaller values of $s\in(0,1)$.
More precisely, an argument in \cite{RS-extremal} suggested the possibility that the extremal solution $u^*$ could be bounded in all dimensions $n\leq7$ and for all $s\in(0,1)$.
However, our results in \cite{RS-extremal} did not give any $L^\infty$ estimate uniform in $s$.

The aim of this paper is to obtain better $L^\infty$ estimates for the fractional Gelfand problem \eqref{pb-G} whenever $\Omega$ is even and convex with respect to each coordinate axis.
Our main result, stated next, establishes the boundedness of the extremal solution $u^*$ whenever \eqref{ineq-gammas} holds and, in particular, whenever $n\leq7$ independently of $s\in(0,1)$.
As explained in Remark \ref{stability-exp-G}, we expect this result to be optimal.

\begin{thm}\label{th1-G}
Let $\Omega$ be a bounded smooth domain in $\R^n$ which is, for every $i=1,...,n$, convex in the $x_i$-direction and symmetric with respect to $\{x_i=0\}$.
Let $s\in(0,1)$, and let $u^*$ be the extremal solution of problem \eqref{pb-G}.
Assume that either $n\leq 2s$, or that $n>2s$ and
\begin{equation}\label{ineq-gammas}
\frac{\Gamma\left(\frac{n}{2}\right)\Gamma(1+s)}{\Gamma\left(\frac{n-2s}{2}\right)}> \frac{\Gamma^2\left(\frac{n+2s}{4}\right)}{\Gamma^2\left(\frac{n-2s}{4}\right)}.
\end{equation}
Then, $u^*$ is bounded.
In particular, the extremal solution $u^*$ is bounded for all $s\in(0,1)$ whenever $n\leq7$.
The same holds if $n=8$ and $s\gtrsim0.28206...$, or if $n=9$ and $s\gtrsim0.63237...$.
\end{thm}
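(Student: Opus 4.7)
The plan is to combine the moving planes method with the stability of $u^*$ and the sharp fractional Hardy inequality. The condition \eqref{ineq-gammas} is designed to match precisely the stability threshold of the explicit singular profile $u_0(x)=2s\log(1/|x|)$, which satisfies $(-\Delta)^s u_0 = \mu_{n,s}\,|x|^{-2s}$ with
\[
\mu_{n,s} \;=\; 2^{2s}\,\frac{\Gamma(n/2)\,\Gamma(1+s)}{\Gamma((n-2s)/2)}.
\]
This is the left-hand side of \eqref{ineq-gammas} (up to the common factor $2^{2s}$), while the right-hand side $\Lambda_{n,s}=2^{2s}\Gamma^2((n+2s)/4)/\Gamma^2((n-2s)/4)$ is the sharp constant in the fractional Hardy inequality. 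Thus \eqref{ineq-gammas} asserts precisely that $\mu_{n,s}>\Lambda_{n,s}$, i.e., the singular profile $u_0$ fails the stability inequality.

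First, I would apply the fractional moving planes method (in the spirit of Chen--Li--Ou and Jarohs--Weth) to the minimal branch: using the strict monotonicity $u\mapsto e^u$ together with the symmetry/convexity of $\Omega$, every minimal solution $u_\lambda$ is even in each $x_i$ and strictly decreasing in $|x_i|$; the same then holds for $u^*$ as the monotone pointwise limit, and in particular $u^*(0)=\|u^*\|_{L^\infty(\Omega)}$. The easy case $n\leq 2s$ is handled directly by the fractional Sobolev embedding $H^s_0(\Omega)\hookrightarrow L^\infty(\Omega)$ together with the finite energy of $u^*$ from \cite{RS-extremal}.

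For $n>2s$ and \eqref{ineq-gammas}, I argue by contradiction: assume $u^*(0)=+\infty$, and rescale $v_\varepsilon(x)=u^*(\varepsilon x)+2s\log\varepsilon$. Then $v_\varepsilon$ satisfies $(-\Delta)^s v_\varepsilon=\lambda^* e^{v_\varepsilon}$ on $\varepsilon^{-1}\Omega\uparrow\R^n$, is axially symmetric-decreasing, and stable. Along a subsequence $v_\varepsilon\to v_\infty$, a stable, axially monotone decreasing, singular entire solution of $(-\Delta)^s v=\lambda^* e^v$ on $\R^n$. A Liouville-type classification identifies $v_\infty$ with $u_0$ up to the scaling/translation invariances; its stability then reads
\[
\mu_{n,s}\int_{\R^n}\frac{\phi^2}{|x|^{2s}}\,dx \;\leq\; \frac{c_{n,s}}{2}\iint_{\R^n\times\R^n}\frac{(\phi(x)-\phi(y))^2}{|x-y|^{n+2s}}\,dx\,dy\qquad\forall\,\phi\in C_c^\infty(\R^n),
\]
and sharpness of the fractional Hardy inequality forces $\mu_{n,s}\leq\Lambda_{n,s}$, contradicting \eqref{ineq-gammas}. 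Hence $u^*$ is bounded, and the consequences for $n\leq 7$ and for $n=8,9$ at the thresholds $s\gtrsim 0.28206\ldots$ and $s\gtrsim 0.63237\ldots$ follow by a direct numerical evaluation of the Gamma quotient.

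The main obstacle is the Liouville-type classification: showing that every stable, axially-monotone, entire singular solution of $(-\Delta)^s v = \lambda\,e^v$ coincides with $u_0$ up to the invariances of the equation. In the local case $s=1$, radial reduction and an ODE analysis give this at once; in the fractional setting one loses the ODE reduction, and I would combine the Caffarelli--Silvestre extension (to localize the operator on the upper half-space), moving planes on the extended problem, and a Pohozaev-type identity to rule out competing profiles. Once this rigidity is in hand, both the contradiction from the sharp Hardy inequality and the final numerical claims are straightforward.
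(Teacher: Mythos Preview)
Your overall strategy---contradiction, blow-up, and comparison with the singular profile $u_0$---is in the right spirit, but the argument has a genuine gap at its core: the Liouville-type classification you invoke. Asserting that every stable, axially monotone, singular entire solution of $(-\Delta)^s v=\lambda e^v$ must coincide with $u_0$ up to the invariances is a hard theorem in its own right; the sketch you give (extension plus moving planes plus Pohozaev) is far from a proof, and no such classification is available in the fractional setting. You yourself flag this as the ``main obstacle,'' and without it the contradiction does not close. There are also compactness issues you glide over: rescaling $u^*$ itself (rather than the bounded minimal solutions $u_\lambda$) gives functions $v_\varepsilon$ that are singular at the origin for every $\varepsilon$, so you need a priori estimates away from $0$ to extract any limit, and you must also justify that stability survives the limit.

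The paper bypasses the Liouville step entirely, and this is the key idea you are missing. A blow-up is still performed, but on the \emph{bounded} solutions $u_{\lambda_k}$, normalized by $\|u_{\lambda_k}\|_{L^\infty}$; the limit is then the constant $1$ (a harmonic function), and this yields only a \emph{lower bound} $u^*(x)\geq (1-\sigma)\log|x|^{-2s}$ near the origin rather than a full classification. The contradiction is then obtained directly on $\Omega$: one combines the equation (tested against $\psi^2$) with the stability inequality (tested against $\psi$), using crucially that $f'=f$ for the exponential, to get
\[
\int_\Omega u_\lambda\,(-\Delta)^s(\psi^2)\,dx \;\leq\; \int_\Omega \psi\,(-\Delta)^s\psi\,dx.
\]
Choosing $\psi\sim |x|^{(2s-n+\varepsilon)/2}$ and computing $(-\Delta)^s$ explicitly on powers produces the constants $H_{n,s}$ and $\lambda_0$ on the two sides; inserting the lower bound on $u^*$ and sending $\varepsilon\to0$, then $\sigma\to0$, forces $\lambda_0\leq H_{n,s}$. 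No entire classification, no Pohozaev identity, and no passage to a limiting equation are needed.
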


The result is new even in the unit ball $\Omega=B_1$.

%In fact, when $s\downarrow0$, is equivalent to $n\gtrsim7.1449787...$.
We point out that, for $n=10$ condition \eqref{ineq-gammas} is equivalent to $s>1$.

Let us next comment on some works related to problem \eqref{pb-G}.

On the one hand, for the power nonlinearity $f(u)=(1+u)^p$, $p>1$, the problem has been recently studied by D\'avila-Dupaigne-Wei \cite{DDW-forthcoming}.
Their powerful methods are based on a monotonicity formula and a blow-up argument, using the ideas introduced in \cite{DDWW} to study the case of the bilaplacian, $s=2$.
For this case $s=2$, extremal solutions with exponential nonlinearity have been also studied; see for example \cite{DDGM}.

On the other hand, Capella-D\'avila-Dupaigne-Sire \cite{CDDS} studied the extremal solution in the unit ball for general nonlinearities for a related operator but different than the fractional Laplacian \eqref{laps}.
More precisely, they considered the \emph{spectral} fractional Laplacian in $B_1$, i.e., the operator $A^s$ defined via the Dirichlet eigenvalues of the Laplacian in $B_1$.
They obtained an $L^\infty$ bound for $u^*$ in dimensions $n<2\left(2+s+\sqrt{2s+2}\right)$ and, in particular, their result yields the boundedness of the extremal solution in dimensions $n\leq 6$ for all $s\in(0,1)$.

Another result in a similar direction is \cite{DDM}, where D\'avila-Dupaigne-Montenegro studied the extremal solution of a boundary reaction problem.
Recall that problems of the form \eqref{pb-G} involving the fractional Laplacian can be seen as a local weighted problem in $\R^{n+1}_+$ by using the extension of Caffarelli-Silvestre.
Similarly, the spectral fractional Laplacian $A^s$ can be written in terms of an extension in $\Omega\times\R_+$.
Thus, the boundary reaction problem studied in \cite{DDM} is also related to a ``fractional'' problem on the boundary, in which $s=1/2$.
Although in this paper we never use the extension problem for the fractional Laplacian, we will use some ideas appearing in \cite{DDM} to prove our results, as explained next.

Recall that the key property of the extremal solution $u^*$ is that it is \emph{stable} \cite{D,RS-extremal}, in the sense that
\[\int_{\Omega}\lambda e^{u^*}\eta^2dx\leq \int_{\R^n}\left|(-\Delta)^{s/2}\eta\right|^2dx\]
for all $\eta\in H^s(\R^n)$ satisfying $\eta\equiv0$ in $\R^n\setminus \Omega$.

In the classical case $s=1$, the main idea of the proof in \cite{CR} is to take $\eta= e^{\alpha u^*}-1$ in the stability condition to obtain a $W^{2,p}$ bound for $u^*$.
When $n<10$, this $W^{2,p}$ estimate leads, by the Sobolev embeddings, to the boundedness of $u^*$.
This is also the approach that we followed in \cite{RS-extremal} to obtain regularity in dimensions $n<10s$.

Here, instead, we assume by contradiction that $u^*$ is singular, and we prove a lower bound for $u^*$ near its singular point.
This is why we need to assume the domain $\Omega$ to be even and convex ---in this case, the singular point is necessarily the origin.
Then, in the stability condition we take an explicit function $\eta(x)$ with the same expected singular behavior as $e^{\alpha u^*(x)}$ (given by the previous lower bound).
More precisely, we take as $\eta$ a power function of the form $\eta(x)\sim |x|^{-\beta}$, with $\beta$ chosen appropriately.
This idea was already used in \cite{DDM}, where D\'avila-Dupaigne-Montenegro studied the extremal solution for a boundary reaction problem.

The paper is organized as follows.
First, in Section \ref{sec-remarks-G} we give some remarks and preliminary results that will be used in the proof of our main result.
Then, in Section~\ref{sec2-G} we prove Theorem \ref{th1-G}.

\section{Some preliminaries and remarks}
\label{sec-remarks-G}

In this section we recall some facts that will be used in the proof of Theorem \ref{th1-G}.

First, recall that a weak solution $u$ of \eqref{pb-G} is said to be stable when
\begin{equation}\label{semistable2-G}
\int_{\Omega}\lambda e^u\eta^2dx\leq \int_{\R^n}\left|(-\Delta)^{s/2}\eta\right|^2dx
\end{equation}
for all $\eta\in H^s(\R^n)$ satisfying $\eta\equiv0$ in $\R^n\setminus\Omega$; see \cite{RS-extremal} for more details.
Note also that, integrating by parts on the right hand side, one can write \eqref{semistable2-G} as
\begin{equation}\label{semistable3-G}
\int_{\Omega}\lambda e^u\eta^2dx\leq \int_{\Omega}\eta(-\Delta)^s\eta\,dx.
\end{equation}
We will use this form of the stability condition in the proof of Theorem \ref{th1-G}.

Next we recall a computation done in \cite{RS-extremal} in which we can see that condition \eqref{ineq-gammas} arises naturally.

\begin{prop}[\cite{RS-extremal}]
Let $s\in(0,1)$, $n>2s$, and $u_0(x)=\log\frac{1}{|x|^{2s}}$.
Then, $u_0$ is a solution of
\[(-\Delta)^su_0=\lambda_0e^{u_0}\quad \textrm{in all of}\ \R^n,\]
with
\begin{equation}\label{lambda0-G}
\lambda_0=2^{2s}\frac{\Gamma\left(\frac{n}{2}\right)\Gamma(1+s)}{\Gamma\left(\frac{n-2s}{2}\right)}.
\end{equation}
Moreover, setting
\begin{equation}\label{Hns-G}
H_{n,s}=2^{2s}\frac{\Gamma^2\left(\frac{n+2s}{4}\right)}{\Gamma^2\left(\frac{n-2s}{4}\right)},
\end{equation}
$u_0$ is stable if and only if $\lambda_0\leq H_{n,s}$.
\end{prop}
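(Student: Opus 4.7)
The plan is to split the proposition into two parts: (i) verifying that $u_0(x) = \log|x|^{-2s}$ solves $(-\Delta)^s u_0 = \lambda_0 e^{u_0}$ with the announced $\lambda_0$, and (ii) characterizing its stability via a sharp fractional Hardy inequality.

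For (i), I would start from the classical explicit identity
\[(-\Delta)^s |x|^{-\gamma} \;=\; c(n,s,\gamma)\,|x|^{-\gamma-2s},\qquad c(n,s,\gamma) \;=\; 2^{2s}\,\frac{\Gamma\left(\tfrac{\gamma+2s}{2}\right)\Gamma\left(\tfrac{n-\gamma}{2}\right)}{\Gamma\left(\tfrac{\gamma}{2}\right)\Gamma\left(\tfrac{n-\gamma-2s}{2}\right)},\]
valid for $0<\gamma<n-2s$, and extract $(-\Delta)^s\log|x|$ by differentiating in $\gamma$ at $\gamma=0$. Using $|x|^{-\gamma} = 1-\gamma\log|x|+O(\gamma^2)$ on the left and the expansion $1/\Gamma(\gamma/2) = \gamma/2 + O(\gamma^2)$ on the right (all other gamma factors being smooth at $\gamma=0$), matching first-order coefficients in $\gamma$ yields
\[(-\Delta)^s\log|x| \;=\; -\tfrac{1}{2}\cdot 2^{2s}\,\frac{\Gamma(s)\,\Gamma(n/2)}{\Gamma((n-2s)/2)}\,|x|^{-2s}.\]
Multiplying by $-2s$ and using $s\,\Gamma(s)=\Gamma(1+s)$ gives $(-\Delta)^s u_0 = \lambda_0\,|x|^{-2s} = \lambda_0 e^{u_0}$ with $\lambda_0$ as in \eqref{lambda0-G}.

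For (ii), inserting $u=u_0$ in the stability condition \eqref{semistable2-G} rewrites stability as
\[\lambda_0\int_{\R^n}\frac{\eta^2}{|x|^{2s}}\,dx \;\leq\; \int_{\R^n}\bigl|(-\Delta)^{s/2}\eta\bigr|^2\,dx \qquad \text{for every }\eta\in H^s(\R^n).\]
This is precisely the fractional Hardy inequality due to Herbst (see also Yafaev and Frank--Lieb--Seiringer), whose optimal constant is $H_{n,s}$ as defined in \eqref{Hns-G}. Consequently $u_0$ is stable if and only if $\lambda_0\leq H_{n,s}$.

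The main delicate point is the derivation of the formula for $(-\Delta)^s\log|x|$: the parametric identity for $(-\Delta)^s|x|^{-\gamma}$ is distributional in nature and the differentiation in $\gamma$ at $\gamma=0$ requires uniform control on the principal-value integrals defining it as $\gamma\to 0^+$. An alternative, more self-contained route is to observe that the scaling identity $\log|\lambda x|=\log|\lambda|+\log|x|$ together with the $2s$-homogeneity of $(-\Delta)^s$ forces $(-\Delta)^s\log|x| = c_{n,s}\,|x|^{-2s}$ for some constant $c_{n,s}$, and to fix $c_{n,s}$ by testing against a convenient radial function whose fractional Laplacian is known in closed form (for instance a Gaussian). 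Once (i) is settled, part (ii) is a direct application of the sharp Hardy inequality and needs no further work.
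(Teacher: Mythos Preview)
The paper does not supply its own proof of this proposition; it is simply quoted from \cite{RS-extremal}. That said, your argument is correct and is essentially the natural one, and it is fully consistent with the tools the paper does develop. For part (i), the power identity you invoke is exactly Proposition~\ref{powers-G}, and your expansion of the constant $c(n,s,\gamma)$ near $\gamma=0$ via $1/\Gamma(\gamma/2)=\gamma/2+O(\gamma^2)$ is the same type of computation carried out in Lemma~\ref{expl-comp-G} (there performed at the other endpoint $\alpha\to n-2s$, with the same use of $\Gamma(1+t)=t\Gamma(t)$ producing the factor $\Gamma(1+s)$). Your caveat about justifying the $\gamma$-differentiation is well placed; the scaling-plus-calibration alternative you sketch is a clean way to make this rigorous. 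For part (ii), the paper itself notes immediately after the proposition that $H_{n,s}$ is the sharp constant in the fractional Hardy inequality, so your reduction of stability of $u_0$ to the inequality $\lambda_0\int |x|^{-2s}\eta^2\le \|(-\Delta)^{s/2}\eta\|_2^2$ and hence to $\lambda_0\le H_{n,s}$ is exactly the intended reading.
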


We point out that $H_{n,s}$ is the best constant in the fractional Hardy inequality, even though we will not use such inequality in this paper.

\begin{rem}\label{stability-exp-G}
This proposition suggests that there could exist a stable singular solution to \eqref{pb-G} in the unit ball whenever $\lambda_0\leq H_{n,s}$.
In fact, we may consider a larger family of problems than \eqref{pb-G}, by considering nonhomogeneous Dirichlet conditions of the form $u=g$ in $\R^n\setminus\Omega$.
For all these problems, our result in Theorem \ref{th1-G} still remains true; see Remark \ref{rem-G}.
In the particular case $\Omega=B_1$ and $g(x)=\log |x|^{-2s}$ in $\R^n\setminus B_1$, the extremal solution to the new problem is exactly $u^*(x)=\log |x|^{-2s}$ in $B_1$ whenever $\lambda_0\leq H_{n,s}$.
Thus, when $\lambda_0\leq H_{n,s}$ we have a singular extremal solution for some exterior condition $g$.

We expect the sufficient condition \eqref{ineq-gammas} of Theorem \ref{th1-G} to be optimal since it is equivalent to $\lambda_0>H_{n,s}$.

The condition $\lambda_0>H_{n,s}$, appeared and was discussed in Remark 3.3 in \cite{RS-extremal}.
\end{rem}

We next give a symmetry result, which is the analog of the classical result of Berestycki-Nirenberg \cite{BN}.
It does not require any smoothness of $\Omega$.
From this result it will follow that, under the hypotheses of Theorem \ref{th1-G}, the solutions $u_\lambda(x)$ attain its maxima at $x=0$.

When $\Omega=B_R$, there are a number of papers proving the radial symmetry of solutions for nonlocal equations.
For domains which are symmetric with respect to an hyperplane, the following Lemma follows from the results in \cite{Jarohs}.

\begin{lem}[\cite{Jarohs}]\label{lem-symmetry}
Let $\Omega$ be a bounded domain which is convex in the $x_1$-direction and symmetric with respect to $\{x_1=0\}$.
Let $f$ be a locally Lipschitz function, and $u$ be a bounded positive solution of
\[\left\{ \begin{array}{rcll}
(-\Delta)^s u &=&f(u)&\textrm{in }\Omega \\
u&=&0&\textrm{in }\mathbb R^n\backslash \Omega.
\end{array}\right.\]
Then, $u$ is symmetric with respect to $\{x_1=0\}$, and it satisfies
\[\partial_{x_1}u<0\quad \textrm{in}\quad \Omega\cap \{x_1>0\}.\]
\end{lem}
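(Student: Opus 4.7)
The plan is to carry out a fractional version of the moving planes method, in the spirit of Berestycki--Nirenberg for the classical case and of Chen--Li--Ou / Jarohs--Weth for the nonlocal one.

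\textbf{Setup.} For $\mu\geq 0$ set $\Sigma_\mu=\Omega\cap\{x_1>\mu\}$, write $x^\mu=(2\mu-x_1,x_2,\dots,x_n)$ for the reflection across $T_\mu=\{x_1=\mu\}$, and define the antisymmetric function $w_\mu(x)=u(x^\mu)-u(x)$. Using that $(-\Delta)^s$ commutes with reflections and that $f$ is locally Lipschitz, one obtains
\[
(-\Delta)^s w_\mu(x)=f(u(x^\mu))-f(u(x))=c_\mu(x)\,w_\mu(x)\quad\text{in }\Sigma_\mu,
\]
for a bounded function $c_\mu$. Let $\mu^*=\sup_{x\in\Omega}x_1$ and set
\[
\mu_0=\inf\bigl\{\mu\in[0,\mu^*)\,:\,w_{\bar\mu}\geq 0\text{ in }\Sigma_{\bar\mu}\text{ for all }\bar\mu\in[\mu,\mu^*)\bigr\}.
\]
The goal is to prove $\mu_0=0$; this gives symmetry with respect to $\{x_1=0\}$, and the accompanying strict inequality gives $\partial_{x_1}u<0$ in $\Omega\cap\{x_1>0\}$.

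\textbf{Key ingredient: antisymmetric maximum principle in narrow regions.} Because $w_\mu$ is antisymmetric across $T_\mu$, the integral defining $(-\Delta)^s w_\mu(x)$ at $x\in\Sigma_\mu$ can be rewritten with the positive kernel
\[
K_\mu(x,y)=c_{n,s}\bigl(|x-y|^{-n-2s}-|x-y^\mu|^{-n-2s}\bigr)>0,\qquad x,y\in\Sigma_\mu,
\]
so that, whenever $w_\mu\geq 0$ outside $\Sigma_\mu$, one has a pointwise representation showing that a negative infimum of $w_\mu$ would be attained at some $x_0\in\Sigma_\mu$ and would force
\[
(-\Delta)^s w_\mu(x_0)\leq -\Bigl(\inf_{x\in\Sigma_\mu}\int_{\Sigma_\mu^c\cap\{y_1<\mu\}} K_\mu(x,y)\,dy\Bigr)|w_\mu(x_0)|.
\]
Combining this with the equation $(-\Delta)^s w_\mu=c_\mu w_\mu$ and using that the above integral is bounded below by a positive constant depending only on $\mathrm{dist}(x,T_\mu)$, one obtains the narrow-region principle: there exists $\delta>0$ (depending on $\|c_\mu\|_\infty$, hence on $\|u\|_\infty$ and the Lipschitz constant of $f$ on $[0,\|u\|_\infty]$) such that if $|\Sigma_\mu|<\delta$ then $w_\mu\geq 0$ in $\Sigma_\mu$. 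I expect this to be the main technical point; the antisymmetric kernel estimate and the nonlocal "extra" terms coming from $\R^n\setminus\Omega$ make it more delicate than in the local case.

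\textbf{Moving and concluding.} For $\mu$ close to $\mu^*$ the set $\Sigma_\mu$ is narrow, so the above principle gives $w_\mu\geq 0$, hence $\mu_0$ is well-defined and $w_{\mu_0}\geq 0$ by continuity. Assume, for contradiction, $\mu_0>0$. Since $\Omega$ is convex in $x_1$ and symmetric about $\{x_1=0\}$, the reflection of $\Sigma_{\mu_0}$ across $T_{\mu_0}$ is strictly contained in $\Omega$, so on a set of positive measure in $\Sigma_{\mu_0}^c\cap\{y_1<\mu_0\}$ one has $u(y^{\mu_0})=0<u(y)$, i.e.\ $w_{\mu_0}<0$ there; using this as a source term in the integral representation and applying a strong maximum principle for antisymmetric solutions of $(-\Delta)^s w+c_\mu w=0$, one concludes $w_{\mu_0}>0$ in $\Sigma_{\mu_0}$. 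A standard continuity-plus-narrow-region argument then produces $\epsilon>0$ with $w_{\mu_0-\epsilon}\geq 0$ in $\Sigma_{\mu_0-\epsilon}$: one splits $\Sigma_{\mu_0-\epsilon}$ into a compact piece where $w_{\mu_0}$ is strictly positive (hence $w_{\mu_0-\epsilon}>0$ by continuity) and a complementary narrow piece where the narrow-region principle applies. This contradicts the definition of $\mu_0$. Hence $\mu_0=0$, which yields $w_0\geq 0$; applying the same argument from the opposite direction gives $w_0\leq 0$, so $u$ is symmetric across $\{x_1=0\}$, and the strict inequality in the strong maximum principle delivers $\partial_{x_1}u<0$ in $\Omega\cap\{x_1>0\}$.
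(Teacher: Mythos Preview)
The paper does not give its own proof of this lemma: it is simply quoted from \cite{Jarohs}, with the remark that ``the following Lemma follows from the results in \cite{Jarohs}.'' So there is no in-paper argument to compare against.

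That said, your outline is exactly the fractional moving planes method underlying the cited reference (and related works such as Felmer--Wang and Barrios--Montoro--Sciunzi), and the main steps are correctly identified: the antisymmetric reformulation via the kernel $K_\mu(x,y)=c_{n,s}(|x-y|^{-n-2s}-|x-y^\mu|^{-n-2s})$, the narrow-region maximum principle for antisymmetric supersolutions, the strong maximum principle yielding $w_{\mu_0}>0$ once $w_{\mu_0}\not\equiv 0$, and the standard sliding argument to push $\mu_0$ down to $0$. Two small points worth tightening if you write this out in full: (i) to apply the antisymmetric maximum principle you need $w_\mu\ge 0$ on all of $\{x_1>\mu\}$, not just on $\Sigma_\mu$; this is automatic here because $u\equiv 0$ outside $\Omega$ and $u>0$ in $\Omega$, but it should be said; (ii) your ``source term'' argument actually shows $w_{\mu_0}\not\equiv 0$ in $\R^n$ (you exhibit $y$ with $y_1<\mu_0$, $y\in\Omega$, $y^{\mu_0}\notin\Omega$, hence $w_{\mu_0}(y)<0$), which is precisely the input the antisymmetric strong maximum principle needs to upgrade $w_{\mu_0}\ge 0$ to $w_{\mu_0}>0$ in $\Sigma_{\mu_0}$. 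With these clarifications your sketch is a faithful rendering of the proof in the cited literature.
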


As said before, this lemma yields that solutions $u_\lambda$ of \eqref{pb-G} satisfy
\[\|u_\lambda\|_{L^\infty(\Omega)}=u_\lambda(0).\]
This allows us to locate the (possible) singularity of the extremal solution $u^*$ at the origin, something that is essential in our proofs.

Finally, to end this section, we compute the fractional Laplacian on a power function, something needed in the proof of Theorem \ref{th1-G}.

\begin{prop}\label{powers-G}
Let $(-\Delta)^s$ be the fractional Laplacian in $\R^n$, with $s>0$ and $n>2s$.
Let $\alpha\in(0,n-2s)$.
Then,
\[(-\Delta)^s |x|^{-\alpha}=2^{2s}\,\frac{\Gamma\left(\frac{\alpha+2s}{2}\right)\Gamma\left(\frac{n-\alpha}{2}\right)}
{\Gamma\left(\frac{n-\alpha-2s}{2}\right)\Gamma\left(\frac{\alpha}{2}\right)}\,|x|^{-\alpha-2s},\]
where $\Gamma$ is the Gamma function.
\end{prop}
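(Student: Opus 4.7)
The plan is to combine scaling and rotational invariance with the Fourier representation of the fractional Laplacian. Since $|x|^{-\alpha}$ is radial and positively homogeneous of degree $-\alpha$, and $(-\Delta)^s$ commutes with rotations while scaling like $|x|^{-2s}$, one immediately sees that $(-\Delta)^s|x|^{-\alpha}$ must be a radial function positively homogeneous of degree $-\alpha-2s$, hence equal to $C(n,s,\alpha)\,|x|^{-\alpha-2s}$ for some constant depending only on $n$, $s$, $\alpha$. The whole task is therefore to identify this constant.

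To compute it I would work on the Fourier side. The classical Riesz identity, valid for $0<\beta<n$ in the tempered distribution sense, reads
\[\mathcal{F}(|x|^{-\beta})(\xi)=c_{n,\beta}\,|\xi|^{\beta-n},\qquad c_{n,\beta}=\frac{2^{n-\beta}\pi^{n/2}\,\Gamma\!\left(\frac{n-\beta}{2}\right)}{\Gamma\!\left(\frac{\beta}{2}\right)}.\]
Since $(-\Delta)^s$ is the Fourier multiplier with symbol $|\xi|^{2s}$, applying it to $|x|^{-\alpha}$ multiplies the transform by $|\xi|^{2s}$, and taking $\beta=\alpha$ gives
\[\mathcal{F}\!\left((-\Delta)^s|x|^{-\alpha}\right)(\xi)=c_{n,\alpha}\,|\xi|^{\alpha+2s-n}.\]
The hypothesis $\alpha\in(0,n-2s)$ is precisely what makes $\alpha+2s\in(0,n)$, so I can invert this distribution using the Riesz identity again with $\beta=\alpha+2s$. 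Matching the two expressions yields $C(n,s,\alpha)=c_{n,\alpha}/c_{n,\alpha+2s}$; cancelling the factors $\pi^{n/2}$ and simplifying the power of $2$ leaves
\[C(n,s,\alpha)=2^{2s}\,\frac{\Gamma\!\left(\frac{\alpha+2s}{2}\right)\Gamma\!\left(\frac{n-\alpha}{2}\right)}{\Gamma\!\left(\frac{n-\alpha-2s}{2}\right)\Gamma\!\left(\frac{\alpha}{2}\right)},\]
which is precisely the formula in the statement.

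The only subtle point is reconciling the principal-value definition \eqref{laps} of $(-\Delta)^s$ with the Fourier-multiplier definition for the non-Schwartz tempered distribution $|x|^{-\alpha}$. I would handle this by a standard approximation: either regularize $|x|^{-\alpha}$ by $(|x|^2+\varepsilon^2)^{-\alpha/2}$, compute $(-\Delta)^s$ of the regularization via Fourier inversion, and pass to the limit as $\varepsilon\downarrow 0$ (the assumption $0<\alpha<n-2s$ provides enough integrability near the origin and enough decay at infinity to justify dominated convergence inside the PV integral), or test both sides against an arbitrary Schwartz function and use Plancherel. Since $(-\Delta)^s|x|^{-\alpha}$, viewed as the PV integral \eqref{laps}, is smooth away from the origin and homogeneous of degree $-\alpha-2s$, and agrees with the distributional computation, the two must coincide pointwise on $\R^n\setminus\{0\}$. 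The main (mild) obstacle is simply this bookkeeping of distributional convergence; as a purely real-variable alternative one could evaluate the PV integral at $x=e_1$ in polar coordinates and recognize the resulting one-dimensional integrals as Beta functions, but the Fourier route is both shorter and more transparent about the role of the hypothesis $\alpha<n-2s$.
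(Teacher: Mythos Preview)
Your proposal is correct and follows essentially the same route as the paper: both compute the constant via the Fourier multiplier representation $(-\Delta)^s\leftrightarrow|\xi|^{2s}$ together with the Riesz identity for $\mathcal{F}(|x|^{-\beta})$, then take the ratio of the two Riesz constants to arrive at $2^{2s}\Gamma\!\left(\tfrac{\alpha+2s}{2}\right)\Gamma\!\left(\tfrac{n-\alpha}{2}\right)\big/\Gamma\!\left(\tfrac{n-\alpha-2s}{2}\right)\Gamma\!\left(\tfrac{\alpha}{2}\right)$. The only differences are cosmetic: the paper uses the normalization $\kappa_\beta=2^{\beta/2}\Gamma(\beta/2)$ and does not comment on the distributional justification, whereas you add the preliminary homogeneity observation and a sketch of how to reconcile the PV and Fourier-multiplier definitions.
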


\begin{proof}
We use Fourier transform, defined by
\[\mathcal F[u](\xi)=(2\pi)^{-n/2}\int_{\R^n}u(x)e^{-i\xi\cdot x}dx.\]
Then, one has that
\begin{equation}\label{Ftrans-laps-G}
\mathcal F\bigl[(-\Delta)^su\bigr](\xi)=|\xi|^{2s}\,\mathcal F[u](\xi).
\end{equation}

On the other hand, the function $|x|^{-\alpha}$, with $0<\alpha<n$, has Fourier transform
\begin{equation}\label{Ftrans-G}
\kappa_\beta\,\mathcal F\bigl[|\cdot|^{-\beta}\bigr](\xi)=\kappa_{n-\beta}|\xi|^{\beta-n},\qquad \kappa_\beta:=2^{\beta/2}\Gamma(\beta/2)
\end{equation}
(see for example \cite[Theorem 5.9]{Lieb-Loss}, where another convention for the Fourier transform is used, however).

Hence, using \eqref{Ftrans-G} and \eqref{Ftrans-laps-G}, we find that
\[\begin{split}
\mathcal F\bigl[(-\Delta)^s|\cdot|^{-\alpha}\bigr](\xi)&=|\xi|^{2s}\,\mathcal F\bigl[|\cdot|^{-\alpha}\bigr](\xi)\\&= \frac{\kappa_{n-\alpha}}{\kappa_{\alpha}}\,|\xi|^{\alpha+2s-n}
=\frac{\kappa_{n-\alpha}}{\kappa_{\alpha}}\,\frac{\kappa_{\alpha+2s}}{\kappa_{n-\alpha-2s}}\,\mathcal F\bigl[|\cdot|^{-\alpha-2s}\bigr](\xi).
\end{split}\]
Thus, it follows that
\[(-\Delta)^s|x|^{-\alpha}=\frac{\kappa_{n-\alpha}}{\kappa_{\alpha}}\,\frac{\kappa_{\alpha+2s}}{\kappa_{n-\alpha-2s}}\,|x|^{-\alpha-2s}=
2^{2s}\,\frac{\Gamma\left(\frac{\alpha+2s}{2}\right)\Gamma\left(\frac{n-\alpha}{2}\right)}
{\Gamma\left(\frac{n-\alpha-2s}{2}\right)\Gamma\left(\frac{\alpha}{2}\right)}\,|x|^{-\alpha-2s},\]
as claimed.
\end{proof}

\section{Proof of the main result}
\label{sec2-G}

The aim of this section is to prove Theorem \ref{th1-G}.
We start with two preliminary lemmas.

The first one gives a lower bound for the singularity of an unbounded extremal solution.
As we will see, this is an essential ingredient in our proof of Theorem \ref{th1-G}.
A similar result was established in \cite{DDM} in the case of the boundary reaction problem considered there.

\begin{lem}\label{lem-essential-G}
Let $n$, $s$, and $u^*$ as in Theorem \ref{th1-G}, and assume that $u^*$ is unbounded.
Then, for each $\sigma\in(0,1)$ there exists $r(\sigma)>0$ such that
\[u^*(x)>(1-\sigma)\log \frac{1}{|x|^{2s}}\]
for all $x$ satisfying $|x|<r(\sigma)$.
\end{lem}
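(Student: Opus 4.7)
The plan is a blow-up analysis at the singular point. First, by Lemma~\ref{lem-symmetry}, for every $\lambda \in (0,\lambda^*)$ the minimal solution $u_\lambda$ attains its maximum at the origin (it is monotone decreasing in each direction on $\Omega \cap \{x_i>0\}$), so $M_\lambda := u_\lambda(0) = \|u_\lambda\|_{L^\infty(\Omega)}$. Since $u_\lambda \uparrow u^*$ and $u^*$ is assumed unbounded, $M_\lambda \to +\infty$ as $\lambda \uparrow \lambda^*$. Define $\epsilon_\lambda := (\lambda\, e^{M_\lambda})^{-1/(2s)}$ and $v_\lambda(y) := u_\lambda(\epsilon_\lambda y) - M_\lambda$. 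The $s$-scaling of $(-\Delta)^s$ gives
\[(-\Delta)^s v_\lambda = e^{v_\lambda} \quad \textrm{in } \Omega_\lambda := \epsilon_\lambda^{-1}\Omega,\]
with $v_\lambda \equiv -M_\lambda$ on $\R^n \setminus \Omega_\lambda$, $v_\lambda(0)=0$, and $v_\lambda \leq 0$. Since $\epsilon_\lambda \to 0$, $\Omega_\lambda \nearrow \R^n$.

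The key step is a uniform local estimate on $v_\lambda$. The right-hand side $e^{v_\lambda}\leq 1$ is uniformly bounded; the delicate point is that $v_\lambda = -M_\lambda$ in the exterior, which is very negative. Crucially, the transition scale $\epsilon_\lambda^{-1} \sim e^{M_\lambda/(2s)}$ grows exponentially faster than $M_\lambda$, so the natural weighted tail $\int_{\R^n}|v_\lambda(y)|(1+|y|)^{-n-2s}\,dy$ is of order $M_\lambda e^{-M_\lambda} = o(1)$. Together with the sign information $v_\lambda(0)=0$ and $v_\lambda\leq 0$, standard interior regularity for the fractional Laplacian then yields uniform $C^\alpha_{\mathrm{loc}}(\R^n)$ bounds on $v_\lambda$. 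In particular, for each $R\geq 1$ there is a constant $C_R$, independent of $\lambda$, with $v_\lambda(y)\geq -C_R$ for all $|y|\leq R$ and all $\lambda$ sufficiently close to $\lambda^*$.

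Translating back, for $|y|\in[1,R]$ and $x=\epsilon_\lambda y$, using $M_\lambda=-2s\log\epsilon_\lambda-\log\lambda$ and $\log\epsilon_\lambda=\log|x|-\log|y|$ one gets
\[u_\lambda(x) = v_\lambda(y) + M_\lambda \geq -C_R + 2s\log\tfrac{1}{|x|} + 2s\log|y| - \log\lambda \geq 2s\log\tfrac{1}{|x|} - C,\]
with $C := C_R + |\log\lambda^*|$, valid for $|x|\in[\epsilon_\lambda,R\epsilon_\lambda]$. By continuity and monotonicity of the minimal branch, $\lambda\mapsto\epsilon_\lambda$ is continuous and decreases to $0$, so as $\lambda$ sweeps toward $\lambda^*$ the intervals $[\epsilon_\lambda,R\epsilon_\lambda]$ cover a punctured neighborhood of the origin. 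Using $u^*\geq u_\lambda$ pointwise, this yields $u^*(x)\geq 2s\log(1/|x|) - C$ for $|x|$ in some fixed neighborhood of $0$. Given $\sigma\in(0,1)$, the desired inequality $u^*(x) > 2s(1-\sigma)\log(1/|x|)$ then follows whenever $2s\sigma\log(1/|x|) > C$, i.e.\ for $|x| < r(\sigma) := e^{-C/(2s\sigma)}$.

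The hard part will be the uniform interior regularity of $v_\lambda$ in the presence of the very negative exterior datum $-M_\lambda$. The algebraic fact that the transition scale $\epsilon_\lambda^{-1}$ is exponentially large compared to $M_\lambda$ is precisely what rescues the weighted tail; once that uniform $C^\alpha_{\mathrm{loc}}$ estimate is secured, the rest of the proof reduces to arithmetic and a covering argument.
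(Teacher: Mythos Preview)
Your strategy is different from the paper's and, with one correction, can be made to work; but as written there is a genuine gap at the point you yourself flag as the hard part.

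\medskip
\noindent\textbf{The gap.} You claim that the weighted integral
\[
\int_{\R^n}\frac{|v_\lambda(y)|}{(1+|y|)^{n+2s}}\,dy
\]
is of order $M_\lambda e^{-M_\lambda}$. This is false: only the contribution from $\R^n\setminus\Omega_\lambda$ (where $|v_\lambda|=M_\lambda$ and $|y|\gtrsim \epsilon_\lambda^{-1}$) is of that size. Inside $\Omega_\lambda$ you know a priori only $|v_\lambda|\le M_\lambda$, so the interior contribution is bounded merely by $C M_\lambda$, which blows up. Hence ``standard interior regularity'' with this tail bound does \emph{not} give uniform local $C^\alpha$ estimates, and your argument as stated does not yield $v_\lambda\ge -C_R$ on $B_R$.

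\medskip
\noindent\textbf{The fix.} The conclusion you need is nonetheless true, via the nonlocal Harnack inequality rather than a tail bound. Set $w_\lambda:=-v_\lambda$. Then $w_\lambda\ge 0$ in \emph{all} of $\R^n$ (since $u_\lambda\le M_\lambda$ everywhere, including outside $\Omega$), $w_\lambda(0)=0$, and $|(-\Delta)^s w_\lambda|=e^{v_\lambda}\le 1$ in $\Omega_\lambda\supset B_{2R}$. The Harnack inequality for globally nonnegative solutions with bounded right-hand side then gives
\[
\sup_{B_R} w_\lambda \le C_R\bigl(\inf_{B_R} w_\lambda + 1\bigr)=C_R,
\]
which is exactly the lower bound $v_\lambda\ge -C_R$ on $B_R$ that feeds into your translation and covering argument. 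After this repair the rest of your proof goes through and in fact yields the slightly stronger statement $u^*(x)\ge 2s\log\frac{1}{|x|}-C$ near the origin.

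\medskip
\noindent\textbf{Comparison with the paper.} The paper avoids this difficulty entirely by a \emph{divisive} rather than subtractive normalization. Arguing by contradiction from a bad sequence $x_k\to 0$, it picks $\lambda_k$ with $u_{\lambda_k}(0)=\log|x_k|^{-2s}$ and sets
\[
v_k(x)=\frac{u_{\lambda_k}(|x_k|x)}{u_{\lambda_k}(0)}.
\]
Then $0\le v_k\le 1$ \emph{globally}, so there is no tail issue whatsoever; one checks $(-\Delta)^s v_k\to 0$ uniformly, passes to a limit $v$ that is bounded $s$-harmonic with $v(0)=1$, concludes $v\equiv 1$ by Liouville, and reads off the contradiction at the points $x_k/|x_k|$. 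Your blow-up preserves the nonlinear structure $(-\Delta)^s v_\lambda=e^{v_\lambda}$ and gives a sharper lower bound, at the cost of invoking the nonlocal Harnack inequality; the paper's normalization throws away the equation but makes compactness trivial.
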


\begin{proof}
We will argue by contradiction.
Assume that there exist $\sigma\in(0,1)$ and a sequence $\{x_k\}\rightarrow0$ for which
\begin{equation}\label{contrad}
u^*(x_k)\leq (1-\sigma)\log\frac{1}{|x_k|^{2s}}.
\end{equation}
Recall that, by Lemma \ref{lem-symmetry}, we have $u_\lambda(0)=\|u_\lambda\|_{L^\infty}$.
Thus, since $u^*$ is unbounded by assumption, we have
\[\|u_\lambda\|_{L^\infty(\Omega)}=u_\lambda(0)\longrightarrow +\infty\quad \textrm{as}\quad \lambda\rightarrow\lambda^*.\]
In particular, there exists a sequence $\{\lambda_k\}\rightarrow\lambda^*$ such that
\[u_{\lambda_k}(0)=\log\frac{1}{|x_k|^{2s}}.\]

Define now the functions
\[v_k(x)=\frac{u_{\lambda_k}(|x_k|x)}{\|u_{\lambda_k}\|_{L^\infty}}=\frac{u_{\lambda_k}(|x_k|x)}{\log\frac{1}{|x_k|^{2s}}},\qquad x\in \Omega_k=\frac{1}{|x_k|}\Omega.\]
These functions satisfy $0\leq v_k\leq 1$, $v_k(0)=1$, and
\[(-\Delta)^s v_k\longrightarrow0\quad \textrm{uniformly in}\ \Omega_k\ \textrm{as}\  k\rightarrow\infty.\]
Indeed,
\[(-\Delta)^s v_k(x)=\frac{1}{\log\frac{1}{|x_k|^{2s}}}|x_k|^{2s}\lambda_ke^{u_{\lambda_k}(|x_k|x)}\leq \frac{\lambda_k}{\log\frac{1}{|x_k|^{2s}}}\leq \frac{\lambda^*}{\log\frac{1}{|x_k|^{2s}}}\longrightarrow 0.\]

Note also that the functions $v_k$ are uniformly H\"older continuous in compact sets of $\R^n$, since $|(-\Delta)^sv_k|$ are uniformly bounded (see for example Proposition 1.1 in \cite{RS-Dir}).
Hence, it follows from the Arzel\`a-Ascoli theorem that, up to a subsequence, $v_k$ converges uniformly in compact sets of $\R^n$ to some function $v$ satisfying
\[(-\Delta)^s v\equiv0\quad \textrm{in}\ \R^n,\quad 0\leq v\leq 1,\quad v(0)=1.\]
Thus, it follows from the strong maximum principle that $v\equiv 1$.

Therefore, we have that
\[v_k(x)\longrightarrow 1 \quad \textrm{uniformly in compact sets of}\ \R^n,\]
and in particular
\[\frac{u_{\lambda_k}(x_k)}{\log\frac{1}{|x_k|^{2s}}}=v_k\left(x_k/|x_k|\right)\longrightarrow 1.\]
This contradicts \eqref{contrad}, and hence the lemma is proved.
\end{proof}

In the next lemma we compute the fractional Laplacian of some explicit functions in all of $\R^n$.
The constants appearing in these computations are very important, since they are very related to the ones in \eqref{ineq-gammas}.

\begin{lem}\label{expl-comp-G}
Let $s\in(0,1)$, $n>2s$, and $\varepsilon>0$ be small enough.
Then
\[(-\Delta)^s |x|^{\frac{2s-n+\varepsilon}{2}}=\left(H_{n,s}+O(\varepsilon)\right)|x|^{\frac{-2s-n+\varepsilon}{2}}\]
and
\[(-\Delta)^s |x|^{2s-n+\varepsilon}=\left(\lambda_0\frac{\varepsilon}{2s}+O(\varepsilon^2)\right)|x|^{-n+\varepsilon},\]
where $H_{n,s}$ and $\lambda_0$ are given by \eqref{Hns-G} and \eqref{lambda0-G}, respectively.
\end{lem}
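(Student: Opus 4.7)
The plan is that both identities are direct applications of Proposition \ref{powers-G}, combined with a Taylor expansion of the resulting Gamma-function quotient in the small parameter $\varepsilon$. The main things to verify are (i) that the exponent chosen puts us in the admissible range $\alpha\in(0,n-2s)$ of Proposition \ref{powers-G}, and (ii) that the value (resp.\ the first nontrivial term) of the coefficient exactly matches $H_{n,s}$ (resp.\ $\lambda_0/(2s)$).

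For the first identity, I would write $|x|^{(2s-n+\varepsilon)/2}=|x|^{-\alpha}$ with $\alpha=(n-2s-\varepsilon)/2$, which lies in $(0,n-2s)$ for $\varepsilon$ small because $n>2s$. Applying Proposition \ref{powers-G} gives
\[
(-\Delta)^s |x|^{(2s-n+\varepsilon)/2}=2^{2s}\,
\frac{\Gamma\!\left(\tfrac{n+2s-\varepsilon}{4}\right)\Gamma\!\left(\tfrac{n+2s+\varepsilon}{4}\right)}
{\Gamma\!\left(\tfrac{n-2s+\varepsilon}{4}\right)\Gamma\!\left(\tfrac{n-2s-\varepsilon}{4}\right)}\,
|x|^{(-2s-n+\varepsilon)/2}.
\]
At $\varepsilon=0$ the quotient collapses to $\Gamma^2((n+2s)/4)/\Gamma^2((n-2s)/4)$, giving exactly $H_{n,s}$. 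Since all four Gamma-arguments stay in regions where $\Gamma$ is smooth and nonzero, the coefficient is $C^\infty$ in $\varepsilon$ near $0$, so Taylor's theorem yields $H_{n,s}+O(\varepsilon)$.

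For the second identity, I would set $\alpha=n-2s-\varepsilon$, which again lies in $(0,n-2s)$ for small $\varepsilon>0$. Proposition \ref{powers-G} then yields
\[
(-\Delta)^s |x|^{2s-n+\varepsilon}=2^{2s}\,
\frac{\Gamma\!\left(\tfrac{n-\varepsilon}{2}\right)\Gamma\!\left(s+\tfrac{\varepsilon}{2}\right)}
{\Gamma\!\left(\tfrac{\varepsilon}{2}\right)\Gamma\!\left(\tfrac{n-2s-\varepsilon}{2}\right)}\,
|x|^{-n+\varepsilon}.
\]
The key new feature is the simple pole of $\Gamma$ at the origin: using $\Gamma(\varepsilon/2)=\tfrac{2}{\varepsilon}\Gamma(1+\varepsilon/2)$, one has $1/\Gamma(\varepsilon/2)=\tfrac{\varepsilon}{2}+O(\varepsilon^2)$. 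The remaining Gamma factors are smooth in $\varepsilon$ at $0$, so expanding the coefficient gives
\[
\frac{\varepsilon}{2}\cdot 2^{2s}\,\frac{\Gamma(n/2)\,\Gamma(s)}{\Gamma((n-2s)/2)}+O(\varepsilon^2).
\]
Finally I would use the functional equation $\Gamma(1+s)=s\,\Gamma(s)$ to rewrite this leading coefficient as $\tfrac{\varepsilon}{2s}\cdot 2^{2s}\,\Gamma(n/2)\Gamma(1+s)/\Gamma((n-2s)/2)=\lambda_0\,\tfrac{\varepsilon}{2s}$, matching the claimed formula.

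There is no real obstacle here beyond careful bookkeeping: the only non-routine point is tracking the $\varepsilon$-factor coming from the pole of $\Gamma$ at $0$ and recognizing, via $\Gamma(1+s)=s\Gamma(s)$, why the constant in front must be $\lambda_0/(2s)$ rather than just $\lambda_0/2$. Both asymptotic expansions are then justified by smoothness of the remaining Gamma-quotients at $\varepsilon=0$.
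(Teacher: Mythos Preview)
Your proposal is correct and follows essentially the same argument as the paper: apply Proposition \ref{powers-G} with $\alpha=(n-2s-\varepsilon)/2$ and $\alpha=n-2s-\varepsilon$, then Taylor-expand the resulting Gamma quotients, using smoothness of $\Gamma$ away from its poles for the first identity and the functional equation $\Gamma(1+t)=t\,\Gamma(t)$ to handle the factor $\Gamma(\varepsilon/2)$ in the second. Your bookkeeping (including the admissibility check $\alpha\in(0,n-2s)$ and the identification of $\lambda_0/(2s)$ via $\Gamma(1+s)=s\Gamma(s)$) matches the paper's computation line by line.
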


\begin{proof}
To prove the result we use Proposition \ref{powers-G} and the properties of the $\Gamma$ function, as follows.

First, using Proposition \ref{powers-G} with $\alpha=\frac12(n-2s-\varepsilon)$ and with $\alpha=n-2s-\varepsilon$, we find
\[(-\Delta)^s |x|^{\frac{2s-n+\varepsilon}{2}}=2^{2s}
\frac{\Gamma\left(\frac{n+2s-\varepsilon}{4}\right)\Gamma\left(\frac{n+2s+\varepsilon}{4}\right)}
{\Gamma\left(\frac{n-2s+\varepsilon}{4}\right)\Gamma\left(\frac{n-2s-\varepsilon}{4}\right)}\,|x|^{\frac{-2s-n+\varepsilon}{2}}\]
and
\[(-\Delta)^s |x|^{2s-n+\varepsilon}=2^{2s}\frac{\Gamma\left(\frac{n-\varepsilon}{2}\right)\Gamma\left(\frac{2s+\varepsilon}{2}\right)}
{\Gamma\left(\frac{\varepsilon}{2}\right)\Gamma\left(\frac{n-2s-\varepsilon}{2}\right)}\,|x|^{-n+\varepsilon},\]
where $\Gamma$ is the Gamma function.

Since $\Gamma(t)$ is smooth and positive for $t>0$, then it is clear that
\[2^{2s}
\frac{\Gamma\left(\frac{n+2s-\varepsilon}{4}\right)\Gamma\left(\frac{n+2s+\varepsilon}{4}\right)}
{\Gamma\left(\frac{n-2s+\varepsilon}{4}\right)\Gamma\left(\frac{n-2s-\varepsilon}{4}\right)}=
2^{2s}\left(\frac{\Gamma\left(\frac{n+2s}{4}\right)}
{\Gamma\left(\frac{n-2s}{4}\right)}\right)^2+O(\varepsilon)=H_{n,s}+O(\varepsilon).\]
Thus, the first identity of the Lemma follows.

To prove the second identity, we use also that $\Gamma(1+t)=t\Gamma(t)$.
We find,
\[\begin{split}
2^{2s}\frac{\Gamma\left(\frac{n-\varepsilon}{2}\right)\Gamma\left(\frac{2s+\varepsilon}{2}\right)}
{\Gamma\left(\frac{\varepsilon}{2}\right)\Gamma\left(\frac{n-2s-\varepsilon}{2}\right)}
&=
2^{2s}\frac{\Gamma\left(\frac{n-\varepsilon}{2}\right)\Gamma\left(\frac{2s+\varepsilon}{2}\right)}
{\Gamma\left(1+\frac{\varepsilon}{2}\right)\Gamma\left(\frac{n-2s-\varepsilon}{2}\right)}\,\frac{\varepsilon}{2}\\
&=
2^{2s}\frac{\Gamma\left(\frac{n}{2}\right)\Gamma(s)}
{\Gamma(1)\Gamma\left(\frac{n-2s}{2}\right)}\left(1+O(\varepsilon)\right)\frac{\varepsilon}{2}\\
&= 2^{2s}\frac{\Gamma\left(\frac{n}{2}\right)s\Gamma(s)}
{\Gamma\left(\frac{n-2s}{2}\right)}\left(\frac{\varepsilon}{2s}+O(\varepsilon^2)\right)\\
&=2^{2s}\frac{\Gamma\left(\frac{n}{2}\right)\Gamma(1+s)}
{\Gamma\left(\frac{n-2s}{2}\right)}\left(\frac{\varepsilon}{2s}+O(\varepsilon^2)\right)\\
&=\frac{\lambda_0}{2s}\,\varepsilon+O(\varepsilon^2).
\end{split}\]
Thus, the lemma is proved.
\end{proof}

We can now give the proof of our main result.

\begin{proof}[Proof of Theorem \ref{th1-G}]
First, note that when $n\leq2s$ the result follows from \cite{RS-extremal}, since we proved there the result for $n<10s$.
Thus, from now on we assume $n>2s$.

To prove the result for $n>2s$ we argue by contradiction, that is, we assume that $u^*$ is unbounded and we show that this yields $\lambda_0\leq  H_{n,s}$.
As we will see, Lemma \ref{lem-essential-G} plays a very important role in this proof.

Let $u_\lambda$, with $\lambda<\lambda^*$, be the minimal stable solution to \eqref{pb-G}.
Using $\psi$ in the stability condition \eqref{semistable3-G}, we obtain
\[\int_{\Omega}\lambda e^{u_\lambda}\psi^2dx\leq \int_{\Omega}\psi(-\Delta)^s\psi\,dx.\]
Moreover, $\psi^2$ as a test function for the equation \eqref{pb-G}, we find
\[\int_{\Omega}u_\lambda(-\Delta)^s(\psi^2)dx=\int_{\Omega}\lambda e^{u_\lambda}\psi^2dx.\]
Thus, we have
\begin{equation}\label{ineq-G}
\int_{\Omega}u_\lambda(-\Delta)^s(\psi^2)dx\leq \int_{\Omega}\psi(-\Delta)^s\psi\,dx\quad \textrm{for all}\quad \lambda<\lambda^*.
\end{equation}

Next we choose $\psi$ appropriately so that \eqref{ineq-G} combined with Lemma \ref{lem-essential-G} yield a contradiction.
This function $\psi$ will be essentially a power function $|x|^{-\beta}$, as explained in the Introduction.

Indeed, let $\rho_0$ be small enough so that $B_{\rho_0}(0)\subset \Omega$.
For each small $\varepsilon>0$, let us consider a function $\psi$ satisfying
\begin{enumerate}
\item $\psi(x)=|x|^{\frac{2s-n+\varepsilon}{2}}$ in $B_{\rho_0}(0)\subset \Omega$.
\item $\psi$ has compact support in $\Omega$.
\item $\psi$ is smooth in $\R^n\setminus\{0\}$.
\end{enumerate}

Now, since the differences $\psi(x)-|x|^{\frac{2s-n+\varepsilon}{2}}$ and $\psi^2(x)-|x|^{2s-n+\varepsilon}$ are smooth and bounded in all of $\R^n$ (by definition of $\psi$), then it follows from Lemma \ref{expl-comp-G} that
\begin{equation}\label{cosa1-G}
(-\Delta)^s\psi(x)\leq \left(H_{n,s}+C\varepsilon\right)|x|^{\frac{-2s-n+\varepsilon}{2}}+C
\end{equation}
and
\begin{equation}\label{cosa2-G}
(-\Delta)^s (\psi^2)(x)\geq \left(\lambda_0\frac{\varepsilon}{2s}-C\varepsilon^2\right)|x|^{-n+\varepsilon}-C,
\end{equation}
where $C$ is a constant that depends on $\rho_0$ but not on $\varepsilon$.

In the rest of the proof, $C$ will denote different constants, which may depend on $\rho_0$, $n$, $s$, $\Omega$, and $\sigma$, but not on $\varepsilon$.
Here, $\sigma$ is any given number in $(0,1)$.

Hence, we deduce from \eqref{ineq-G}-\eqref{cosa1-G}-\eqref{cosa2-G}, that
\begin{equation}\label{tgh-G}
\left(\lambda_0\frac{\varepsilon}{2s}-C\varepsilon^2\right)\int_{\Omega}u_\lambda|x|^{\varepsilon-n}dx\leq \left(H_{n,s}+C\varepsilon\right)\int_{\Omega}|x|^{\varepsilon-n}dx+C.
\end{equation}
We have used that $\int_\Omega u_\lambda\leq \int_\Omega u^*\leq C$ uniformly in $\lambda$ (see \cite{RS-extremal}).
Since the right hand side does not depend on $\lambda$, we can let $\lambda\longrightarrow\lambda^*$ to find that \eqref{tgh-G} holds also for $\lambda=\lambda^*$.

Next, for the given $\sigma\in(0,1)$, we apply Lemma \ref{lem-essential-G}.
Since $u^*$ is unbounded by assumption, we deduce that there exists $r(\sigma)>0$ such that
\[u^*(x)\geq (1-\sigma)\log\frac{1}{|x|^{2s}}\quad \textrm{in}\  B_{r(\sigma)}.\]
Thus, we find
\begin{equation}\label{find-G}
(1-\sigma) \left(\lambda_0\frac{\varepsilon}{2s}-C\varepsilon^2\right)\int_{B_{r(\sigma)}}|x|^{\varepsilon-n}\log\frac{1}{|x|^{2s}}dx\leq  \left(H_{n,s}+C\varepsilon\right)\int_{\Omega}|x|^{\varepsilon-n}dx+C.
\end{equation}

Now, we have
\[\begin{split}
\int_{B_{r(\sigma)}}|x|^{\varepsilon-n}\log\frac{1}{|x|^{2s}}dx&=2s|S^{n-1}|\int_0^{r(\sigma)}r^{\varepsilon-1}\log\frac{1}{r}\,dr \\
&=2s|S^{n-1}|\, \left(r(\sigma)\right)^\varepsilon\frac{1-\varepsilon\log\frac{1}{r(\sigma)}}{\varepsilon^2}\\
&\geq \left\{2s|S^{n-1}|\left(r(\sigma)\right)^\varepsilon-C\varepsilon\right\}\frac{1}{\varepsilon^2}\end{split}\]
and
\[\int_{\Omega}|x|^{\varepsilon-n}dx\leq |S^{n-1}|\int_0^1r^{\varepsilon-1}dr+C=|S^{n-1}|\frac{1}{\varepsilon}+C.\]
Therefore, by \eqref{find-G},
\[(1-\sigma) \left(\lambda_0\frac{\varepsilon}{2s}-C\varepsilon^2\right) \left\{2s|S^{n-1}|\left(r(\sigma)\right)^\varepsilon-C\varepsilon\right\}\frac{1}{\varepsilon^2}\leq  \left(H_{n,s}+C\varepsilon\right)|S^{n-1}|\frac{1}{\varepsilon}+C.\]
Hence, multiplying by $\varepsilon$ and rearranging terms,
\[(1-\sigma)\lambda_0\left(r(\sigma)\right)^\varepsilon \leq  H_{n,s}+C\varepsilon.\]
Letting now $\varepsilon\rightarrow0$ (recall that $\sigma\in(0,1)$ is an arbitrary given number), we find
\[(1-\sigma) \lambda_0\leq  H_{n,s}.\]

Finally, since this can be done for each $\sigma\in(0,1)$, we deduce that
\[\lambda_0\leq  H_{n,s},\]
a contradiction.
\end{proof}

\begin{rem}\label{rem-G}
Note that in our proof of Theorem \ref{th1-G} the exterior condition $u\equiv0$ in $\R^n\setminus\Omega$ plays no role.
Thus, the same result holds true for \eqref{pb-G} with any other exterior condition $u=g$ in $\R^n\setminus\Omega$.

On the other hand, note that the nonlinearity $f(u)=e^u$ plays a very important role in our proof.
Indeed, to establish \eqref{ineq-G} we have strongly used that $f'(u)=f(u)$, since we combined the stability condition (in which $f'(u)$ appears) with the equation (in which only $f(u)$ appears).
It seems difficult to extend our proof to the case of more general nonlinearities.
Even for the powers $f(u)=(1+u)^p$, it is not clear how to do it.

%Finally, note also that the fact that $s\in(0,1)$ is not very important in our proof.
\end{rem}

\section*{Acknowledgements}

The author thanks Xavier Cabr\'e for his guidance and useful discussions on the topic of this paper.

\end{document}